\newtheorem{theorem}{Theorem}[section]
\newtheorem{lemma}[theorem]{Lemma}
\newtheorem{proposition}[theorem]{Proposition}
\newtheorem{corollary}[theorem]{Corollary}
\theoremstyle{definition}
\newtheorem{definition}[theorem]{Definition}
\newtheorem{example}[theorem]{Example}
\theoremstyle{remark}
\newtheorem{remark}[theorem]{Remark}
\numberwithin{equation}{section}
\def\id{{\bf 1}\!\!{\rm I}}
\def\bc{{\mathbb C}}
\def\bn{{\mathbb N}}
\def\br{{\mathbb R}}
\def\l{\lambda}
\def\i{\varepsilon}
\def\t{\tau}
\def\f{\varphi}
\def\a{\alpha}
\def\m{\mu}
\def\d{\delta}
\def\ck{\mathcal{K}}
\begin{document}
\setcounter{page}{1}

\title[Rates of convergence]{Spectral conditions for uniform $P$-ergodicities of Markov operators on abstract states spaces}

\author[Nazife Erkur\c{s}un-\"Ozcan, Farrukh Mukhamedov]{Nazife Erkur\c{s}un-\"Ozcan$^1$ and Farrukh Mukhamedov$^2$$^*$}

\address{$^{1}$ Department of Mathematics, Faculty of Science, Hacettepe University, Ankara, 06800, Turkey}
\email{{erkursun.ozcan@hacettepe.edu.tr}}

\address{$^{2}$ Department of Mathematical Sciences, College of Science, United Arab Emirates University 15551, Al Ain, Abu Dhabi, United
Arab Emirates} \email{{far75m@yandex.ru; farrukh.m@uaeu.ac.ae}}


\subjclass[2010]{Primary 47A35; Secondary  60J10; 28D05}

\keywords{uniform $P$-ergodicity; Markov operator; best rate; ergodicity coefficient;}

\date{Received: xxxxxx; Revised: yyyyyy; Accepted: zzzzzz.
\newline \indent $^{*}$ Corresponding author}

\begin{abstract}
In the present paper deals with asymptotical stability of Markov operators acting on abstract state spaces (i.e. an ordered Banach space, where the norm has an additivity
property on the cone of positive elements). Basically, we are interested in the rate of convergence when a Markov operator $T$ satisfies the uniform $P$-ergodicity, i.e. $\|T^n-P\|\to 0$, here $P$ is a projection. We have showed that $T$ is uniformly $P$-ergodic if and only if $\|T^n-P\|\leq C\beta^n$, $0<\beta<1$. In this paper, we prove that such a $\beta$ is characterized by the spectral radius of
$T-P$. Moreover, we give Deoblin's kind of conditions for the
uniform $P$-ergodicity of Markov operators.
\end{abstract} \maketitle

\section{Introduction}

The present work is a continuation of the paper \cite{MA} where we
have introduced a generalized Dobrushin ergodicity coefficient
$\d_P(T)$ of Markov operators (acting on abstract state spaces)
with respect to a projection $P$, and studied its properties. In \cite{MA} we have characterized the uniform $P$-ergodicity of a Markov operator, i.e.
$\|T^n-P\|\to 0$ in terms of $\d_P(T)$. If $P$ is a rank one projection, then such kind of ergodicity has been intensively studied
by many authors \cite{B,E,Mit,Mit1,Paz}.  When $P$ is not a rank one projection, then it
turned out that the introduced coefficient was more effective than
the usual Dobrushin's ergodicity coefficient (see \cite{D}). We stress that investigations on the asymptotic stability of Markov operators to projections were only considered when projections were taken compact ones.  However, in the general setting, there were a few papers (see, for example \cite{H}).
Therefore, our investigation is more general and allows to gets interesting results in both classical and non-commutative settings.

On the other hand, as soon as we have the uniform $P$-ergodicity, it is natural to study the rate of convergence of the quantity $\beta$ in
$\|T^n-P\|\leq C\beta^n$. We point
out if $T$ represents a discrete Markov chain and $P$ is a rank one projection, then the best possible value for $\beta$ is characterized by the spectral radius of
$T-P$ \cite{IL,Num}.  Main aim of this paper is to establish a similar kind of estimation for $\beta$ in a general setting. Namely, we consider a much more general situation, where Markov operator $T$ acts on some abstract state space and $P$ is also some Markov projection acting on the same space.
In what follows, by an abstract state space it is
meant an ordered Banach space, where the norm has an additivity
property on the cone of positive elements. Examples of these
spaces include all classical $L^1$-spaces and the space of density
operators acting on some Hilbert spaces \cite{Alf,Jam}. Moreover,
any Banach space can be embedded into some abstract spaces (see
Appendix, Example \ref{E13}).
We notice that the consideration of these types of Banach spaces is convenient and important for the study of several properties of physical and
probabilistic processes in an abstract framework which covers the classical and quantum cases (see \cite{Alf,E}). In this
setting, limiting behaviors of Markov operators were
investigated in \cite{B,EW1,EM2017,EM2018,R,SZ}.

The paper is organized as follows. In Section 2, we provide preliminary definitions and results on properties of the generalized Dobrushin coefficient of a Markov operator acting on abstract state spaces. Section 3 we establish that $\beta$ is characterized by the spectral radius of
$T-P$.  We point out that the obtained
results are even new in the classical (when the projection is not rank one) and quantum
settings (comp. \cite{RKW}). Furthermore, in Section 4, we give other kind (more constructive) of necessary and sufficient conditions for the
uniform $P$-ergodicity of Markov operators.

\section{Preliminaries}

In this section, a few necessary definitions and facts about the ordered Banach spaces are collected.

Let $X$ be an ordered vector space over $\br$
with a cone $X_+=\{x\in X: \ x\geq 0\}$. A subset $\ck$ is called
a {\it base} for $X$, if one has $\ck=\{x\in X_+:\ f(x)=1\}$ for
some strictly positive (i.e. $f(x)>0$ for $x>0$) linear functional
$f$ on $X$. An ordered vector space $X$ with generating cone $X_+$
(i.e. $X=X_+-X_+$) and a fixed base $\ck$, defined by a functional
$f$, is called {\it an ordered vector space with a base}
\cite{Alf}. Hereinafter, we denote it as  $(X,X_+,\ck,f)$. Let
$U$ be the convex hull of the set $\ck\cup(-\ck)$, and let
$$
\|x\|_{\ck}=\inf\{\l\in\br_+:\ x\in\l U\}.
$$
Then one can see that $\|\cdot\|_{\ck}$ is a seminorm on $X$.
Moreover, one has $\ck=\{x\in X_+: \ \|x\|_{\ck}=1\}$,
$f(x)=\|x\|_{\ck}$ for $x\in X_+$.
Assume that the seminorm becomes a norm, and $X$ is complete with respect to this norm, and $X_+$ is closed. Then
$(X,X_+,\ck,f)$ is called \textit{an abstract state
space}. In this case, $\ck$ is a closed face of the unit ball of $X$,  and $U$ contains the open
unit ball of $X$.
If the set $U$ is \textit{radially compact} \cite{Alf}, i.e.
$\ell\cap U$ is a closed and bounded segment for every line $\ell$
through the origin of $X$,  then  $\|\cdot\|_{\ck}$ is a
norm. The radial compactness is equivalent to the coincidence of $U$ with the closed unit ball of $X$.  In this case,
$X$  is called a \textit{strong abstract state
space}.  In the sequel, for the sake of simplicity, instead of
$\|\cdot\|_{\ck}$, the standard notation $\|\cdot\|$ is used.
To better understand the difference between a strong abstract state space and a more general class of base norm spaces, the reader is referred to
\cite{Yo,WN}.

Let $(X,X_+,\ck,f)$ be an abstract state space. A linear operator $T:X\to X$ is
called \textit{positive}, if $Tx\geq 0$ whenever $x\geq 0$. A positive linear
operator $T:X\to X$ is said to be a {\it Markov operator}, if $T(\ck)\subset\ck$.
From this, it is clear that $\|T\|=1$, and its adjoint mapping $T^*: X^*\to
X^*$ acts in an ordered Banach space $X^*$ with unit $f$.

Let $(X,X_+,\ck,f)$ be an abstract state space and let $T:X\to X$
be a Markov operator. Consider a projection
operator $P: X\to X$ (i.e. $P^2=P$). According to \cite{MA}
$T$  is called \textit{uniformly $P$-ergodic} if
$$
\lim_{n\to\infty}\|T^{n}-P\|=0.
$$
From this definition we immediately find that $P$ must be a Markov
projection.

We note that if $P=T_y$, for some $y\in X_+$, where
$T_y(x)=f(x)y$, then the uniform $P$-ergodicity coincides with
uniform ergodicity or uniform asymptotical stability considered in
\cite{M0,M01}.

Let $(X,X_+,\ck,f)$ be an abstract state space, and $T:X\to X$ be a Markov operator. Then the Dobrushin's coeefficient of $T$ is given by
\begin{equation}
\label{db} \d(T)=\sup_{x\in N,\ x\neq 0}\frac{\|Tx\|}{\|x\|}
\end{equation}
where \begin{equation}
\label{NN} N=\{x\in X: \ f(x)=0\}.
\end{equation}
 It is noticed that $\d(T)$
has been introduced and investigated in \cite{M0,M01}.

In \cite{MA}, it has been given a generalized version of the Dobrushin's ergodicity coefficient.

Let $(X,X_+,\ck,f)$ be an abstract state space and let $T:X\to X$
be a linear bounded operator and $P$ be a non-trivial projection
operator on $X$. Then
\begin{equation} \label{Dbp} \d_P(T)=\sup_{x\in N_P,\ x\neq
0}\frac{\|Tx\|}{\|x\|},
\end{equation}
where
\begin{equation}\label{Np} N_P=\{x\in X: \
Px=0\}.
\end{equation}

If $P=I$, we put $ \d_P(T)=1$. The quantity $\d_P(T)$ is called
the \textit{generalized Dobrushin ergodicity coefficient of $T$
with respect to $P$}.

We notice that if $X=\br^n$, then there are some formulas to
calculate this coefficient (see \cite{H,HR}).

\begin{remark}
Let $y_0\in \ck$ and consider the projection $Px=f(x)y_0$. Then
one can see that $N_P$ coincides with
\[N= \{ x\in X;\ f(x)=0\},\]
and in this case $\d_P(T)=\d(T)$. Hence, $\d_P(T)$ indeed is a
generalization of $\d(T)$.
\end{remark}

\begin{remark}
Let $P$ be a Markov projection on $X$. Then, for any Markov
operator $T:X\to X$
\[\d_P(T) \leq \d(T).\]
\end{remark}

Using $\d_P$, we define weak $P$-ergodicity of $T$. Namely, a Markov operator $T:X\to X$ is called {\it weakly $P$-ergodic}
if
$$
\lim_{n\to\infty}\d_P(T^{n})=0.
$$

We point out that the relations between uniform and week
$P$-ergodicities are discussed in \cite[Section 6]{MA}.

By $\Sigma (X)$ we denote the set of all Markov operators on $X$, and $\Sigma_P (X)$ the set of all Markov operators $T$ on $X$ with $PT =TP$.

The next result establishes several properties of the Dobrushin
ergodicity coefficient.

\begin{theorem}\label{Dob}
Let $(X, X_+ , \ck ,f)$ be an abstract state space, $P$ be a projection on $X$ and let  $T,S  \in \Sigma (X)$. The following statements hold:
\begin{itemize}
\item[(i)] $0 \leq \d_P (T) \leq 1$;
\item[(ii)] $|\d_P (T) - \d_P (S)| \leq \d_P (T-S) \leq \left\| T - S\right\|$;
\item[(iii)] if $H: X\to X$ is a linear bounded
operator such that $HP = PH$, then
$$
\d_P (TH) \leq \d_P (T) \left\| H \right\| ;
$$
\item[(iv)] if $H: X\to X$ is a linear bounded
operator such that $PH =0$, then
$$
\left\| TH\right\| \leq \d_P (T) \left\| H \right\| ;
$$
\item[(v)] if $S \in \Sigma_P (X)$, then
$$
\d_P (TS) \leq \d_P (T) \d_P (S).
$$
\end{itemize}
\end{theorem}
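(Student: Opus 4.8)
The plan is to establish each of the five properties by exploiting the definition of $\d_P$ as a supremum over the subspace $N_P = \{x : Px = 0\}$, together with the Markov structure $\|T\| = 1$ and the commutation hypotheses. I would organize the proof so that the elementary bounds (i), (ii) come first, since they follow almost directly from the definition, and then build (iii)--(v) on a single observation about how the subspace $N_P$ behaves under operators that interact well with $P$.

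For (i), the lower bound $\d_P(T) \geq 0$ is immediate since each quotient $\|Tx\|/\|x\|$ is nonnegative, and the upper bound follows because $T$ is Markov, hence $\|T\| = 1$, so $\|Tx\|/\|x\| \leq 1$ for every $x \in N_P$. For (ii), I would first note that $\d_P$ is defined for arbitrary bounded operators (not only Markov ones), so $\d_P(T-S)$ makes sense; then the right inequality $\d_P(T-S) \leq \|T-S\|$ is again the operator-norm bound restricted to $N_P$, while the left inequality $|\d_P(T) - \d_P(S)| \leq \d_P(T-S)$ follows from the reverse triangle inequality $\bigl|\,\|Tx\| - \|Sx\|\,\bigr| \leq \|(T-S)x\|$ applied inside the supremum, using the subadditivity/sublinearity of sup over the common index set $N_P$.

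The key structural point, which powers (iii)--(v), is understanding when an operator maps $N_P$ into $N_P$ or out of it. For (iii), the hypothesis $HP = PH$ gives that if $x \in N_P$ then $PHx = HPx = 0$, so $H$ preserves $N_P$; thus for $x \in N_P$ we can write $\|THx\| = \|T(Hx)\| \leq \d_P(T)\|Hx\| \leq \d_P(T)\|H\|\|x\|$, and taking the supremum yields the claim. For (iv), the hypothesis $PH = 0$ means the entire range of $H$ lies in $N_P$, so for \emph{every} $x \in X$ (not merely $x \in N_P$) we have $Hx \in N_P$ and hence $\|THx\| \leq \d_P(T)\|Hx\| \leq \d_P(T)\|H\|\|x\|$; taking the supremum over the unit ball gives the operator-norm bound $\|TH\| \leq \d_P(T)\|H\|$. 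For (v), with $S \in \Sigma_P(X)$ we have $PS = SP$ and $\|S\|=1$, so $S$ preserves $N_P$ and $\d_P(TS) \leq \d_P(T)\d_P(S)$ follows by applying the definition to $Sx \in N_P$: namely $\|TSx\| \leq \d_P(T)\|Sx\| \leq \d_P(T)\d_P(S)\|x\|$ for $x \in N_P$.

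The part I expect to require the most care is keeping track of \emph{which} domain the supremum ranges over in each item: in (iii) and (v) the relevant vectors stay inside $N_P$ so one genuinely bounds $\d_P(TH)$ or $\d_P(TS)$, whereas in (iv) the range condition $PH=0$ upgrades the conclusion to the full operator norm $\|TH\|$. The only mild subtlety is verifying that the commutation hypotheses truly send $N_P$ into itself rather than merely into some larger set, but this is exactly the one-line computation $PHx = HPx = 0$; once that is in place, every estimate reduces to chaining the definition of $\d_P$ with submultiplicativity of the norm.
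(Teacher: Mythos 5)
Your proposal is correct. Note that the paper itself gives no proof of this theorem --- it is recalled from the earlier work \cite{MA} --- so there is nothing to diverge from; your arguments are the standard ones that any proof of this statement would use, namely working directly from the definition of $\d_P$ as a supremum over $N_P$, and the key observation that the hypotheses in (iii)--(v) ($HP=PH$, $PH=0$, $PS=SP$) guarantee that the relevant operator maps $N_P$ (respectively, all of $X$ in case (iv)) into $N_P$, after which each bound follows by chaining the definition with $\|T\|=1$ for Markov operators. The only point worth flagging is the degenerate case $N_P=\{0\}$ (i.e.\ $P=I$), which the paper disposes of by the convention $\d_P(T)=1$; your estimates are otherwise complete, including the finiteness of the suprema needed for the reverse-triangle-inequality step in (ii).
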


\begin{remark}\label{SS}
Let $X$ be a strong abstract state space (i.e. $X_+$ is 1-generating) and let $P$ be a Markov projection. Then
\begin{equation}\label{SS1}
\d_P (T) = \frac{1}{2} \sup \{\left\| Tu -
Tv\right\| : u,v \in \ck, \  \  u-v \in N_P\} .
\end{equation}
\end{remark}

Let $X$ be an abstract state space. Its complexification $\tilde X$ is defined by $\tilde{X}=X+iX$ with a reasonable norm $\|\cdot\|_\bc$ (see \cite{MST} for details). In this setting,
$X$ is called the \emph{real part} of
$\tilde X$. The \emph{positive cone} of $\tilde X$ is defined as $X_+$.  A vector $f \in \tilde X$ is called \emph{positive}, which
we denote by $f \geq 0$, if $f \in X_+$.  For two elements $f,g\in \tilde X$ we write, as usual, $f \leq g$ if $g-f \geq 0$. In the dual space $\tilde X^*$ of $\tilde X$, one can
introduce an order as follows: a functional $\varphi \in \tilde X^*$ fulfils $\varphi \geq 0$ if and only
if $\langle \varphi, x\rangle \geq 0$ for all $x \in X_+$; we denote the
positive cone in $\tilde X^*$ by $\tilde X^*_+:= (\tilde X^*)_+$.
In what follows, we assume that the norm $\|\cdot\|_\bc$ is taken as
\[\|x+iy\|_\infty = \sup_{0\leq t\leq 2\pi} \|x\cos t-y\sin t\|.\]
We note that all other complexification norms on $\tilde{X}$ are equivalent to $\|\cdot\|_\infty$, and moreover, $\|\cdot\|_\infty$ is the smallest one among all reasonable norms.

A linear mapping $T:X\to X$ can be uniquely extended to $\tilde{T}:\tilde{X} \to \tilde{X}$ by $\tilde{T}(x+iy)=Tx+iTy$. The operator $\tilde{T}$ is called the \textit{extension}
of $T$ and it is well-known that $\|\tilde{T}\|=\|T\|$. In what follows, a mapping $\tilde{T}:\tilde{X}\to \tilde{X}$ is called \textit{Markov}
if it is the extension of a Markov operator $T$. Let $\tilde{P}$ be the extension of a projection $P:X\to X$, and define
\[\tilde{\d }_{\tilde{P}}(\tilde{T})=\sup_{x\in N_{\tilde{P}}}\frac{\|\tilde{T}x\|_\infty}{\|x\|_\infty},\]
where $N_{\tilde{P}}=\{x\in \tilde{X};\ \tilde{P}x=0 \}$.

\begin{lemma}\cite{MA}
Let $X$ be a normed space, $T:X\to X$ be an operator and let $\tilde{T}$ be its extension. Then
\[\tilde{\d }_{\tilde{P}}(\tilde{T})= \d_P(T).\]
\end{lemma}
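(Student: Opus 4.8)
The plan is to show the equality $\tilde{\d}_{\tilde P}(\tilde T)=\d_P(T)$ by proving two inequalities. The nontrivial direction is $\tilde{\d}_{\tilde P}(\tilde T)\leq \d_P(T)$, since restricting the supremum defining $\tilde{\d}_{\tilde P}(\tilde T)$ to real vectors $x\in N_P\subset N_{\tilde P}$ immediately gives $\tilde{\d}_{\tilde P}(\tilde T)\geq \d_P(T)$ (using that on real vectors the complexification norm $\|\cdot\|_\infty$ reduces to the original norm $\|\cdot\|$ and that $\tilde T x=Tx$, $\tilde P x=Px$).

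\medskip

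For the reverse inequality, first I would record that $\tilde T$ is real-linear in the precise sense that it commutes with the conjugation $x+iy\mapsto x-iy$, so that $\tilde T(x\cos t - y\sin t)=(\tilde T x)\cos t-(\tilde T y)\sin t$ for every real $t$; the same identity holds for $\tilde P$. The key step is then to unwind the definition of the supremum norm $\|\cdot\|_\infty$. For an arbitrary $z=x+iy\in N_{\tilde P}$ I would write, using the defining formula for $\|\cdot\|_\infty$,
\[
\|\tilde T z\|_\infty=\sup_{0\leq t\leq 2\pi}\|Tx\cos t-Ty\sin t\|=\sup_{0\leq t\leq 2\pi}\|T(x\cos t-y\sin t)\|.
\]
For each fixed $t$ set $w_t=x\cos t-y\sin t\in X$, a genuine \emph{real} element of $X$. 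The condition $\tilde P z=0$ means $Px=0$ and $Py=0$, whence $Pw_t=0$, i.e. $w_t\in N_P$ for every $t$.

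\medskip

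From $w_t\in N_P$ and the definition \eqref{Dbp} of $\d_P(T)$, I obtain $\|Tw_t\|\leq \d_P(T)\|w_t\|$ for each $t$, and taking the supremum over $t$ gives
\[
\|\tilde T z\|_\infty=\sup_{0\leq t\leq 2\pi}\|Tw_t\|\leq \d_P(T)\sup_{0\leq t\leq 2\pi}\|w_t\|=\d_P(T)\|z\|_\infty,
\]
where the last equality is again just the definition of $\|z\|_\infty$. Dividing by $\|z\|_\infty$ and taking the supremum over all nonzero $z\in N_{\tilde P}$ yields $\tilde{\d}_{\tilde P}(\tilde T)\leq \d_P(T)$, completing the proof. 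The only delicate point is the bookkeeping in the middle step: one must verify that each slice $w_t$ lands in $N_P$ and that the complexification norm genuinely reduces to an ordinary supremum of real-vector norms; once the definition of $\|\cdot\|_\infty$ is taken seriously, the two suprema over $t$ match up exactly and there is no real obstacle beyond this.
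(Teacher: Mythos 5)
Your proof is correct. The paper itself gives no proof of this lemma---it is quoted from \cite{MA}---but your argument is the natural one and is complete: the inequality $\tilde{\d}_{\tilde{P}}(\tilde{T})\geq \d_P(T)$ by restricting to real vectors, and the reverse inequality by slicing the complexification norm into the real vectors $w_t=x\cos t-y\sin t$, checking that $\tilde{P}z=0$ forces $Px=Py=0$ (hence $w_t\in N_P$ for every $t$), and applying the bound $\|Tw_t\|\leq \d_P(T)\|w_t\|$ slice by slice before taking the supremum over $t$.
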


Now, let $S\in \Sigma (X)$ and let $P$ be a projection on $X$. Recall that $X=PX\oplus(I-P)X$ and so the dual $X^*=(PX)^*\oplus((I-P)X)^*$. Assume  that $\l$ is an eigenvalue of $S$, in the following we discuss the comparison between $|\l|$ and $\d_P(T)$.

\begin{theorem}\cite{MA}
Let $P$ be a  Markov projection on a complex space $X$ and let $S \in \Sigma_P(X)$. If one of the following conditions is satisfied:
\begin{enumerate}
\item[(i)]  $\l \neq 1$ is an eigenvalue of $S$ in $(I-\tilde{P})\tilde{X}$; or
\item[(ii)]  $\l \neq 1$ is an eigenvalue of $S^*$ in $((I-\tilde{P})\tilde{X})^*$,
\end{enumerate}
then $|\l|\leq \d_P(S)$.
\end{theorem}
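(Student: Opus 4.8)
The plan is to reduce both cases to the single observation that $\d_P(S)$ is exactly the operator norm of $\tilde S$ restricted to the subspace $N_{\tilde P}$. First I would record that $N_{\tilde P}=(I-\tilde P)\tilde X$: indeed $\tilde Px=0$ forces $x=(I-\tilde P)x$, while conversely $\tilde P(I-\tilde P)y=0$. Since $S\in\Sigma_P(X)$ means $PS=SP$, the extension $\tilde S$ commutes with $\tilde P$, so $N_{\tilde P}$ is $\tilde S$-invariant and $A:=\tilde S|_{N_{\tilde P}}$ is a well-defined bounded operator on the closed subspace $N_{\tilde P}=\ker\tilde P$. By the definition of $\tilde{\d}_{\tilde P}$ and the preceding Lemma,
\[
\|A\|=\sup_{x\in N_{\tilde P},\ x\neq 0}\frac{\|\tilde Sx\|_\infty}{\|x\|_\infty}=\tilde{\d}_{\tilde P}(\tilde S)=\d_P(S).
\]

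For case (i) the conclusion is then immediate. If $\l$ is an eigenvalue of the restriction of $S$ to $(I-\tilde P)\tilde X=N_{\tilde P}$, there is $x\in N_{\tilde P}$, $x\neq 0$, with $\tilde Sx=\l x$; hence $\|\tilde Sx\|_\infty/\|x\|_\infty=|\l|$ and so $\d_P(S)=\tilde{\d}_{\tilde P}(\tilde S)\geq|\l|$. Equivalently, $\l$ is an eigenvalue of $A$, whence $|\l|\leq\|A\|=\d_P(S)$.

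For case (ii) I would first identify the summand $((I-\tilde P)\tilde X)^*$ appearing in $\tilde X^*=(\tilde P\tilde X)^*\oplus((I-\tilde P)\tilde X)^*$ with $(N_{\tilde P})^*$, and match it with $\ker\tilde P^*=(I-\tilde P^*)\tilde X^*$, using that a functional $\varphi$ vanishes on $\tilde P\tilde X$ precisely when $\tilde P^*\varphi=0$. Because $\tilde S$ is block-diagonal for the decomposition $\tilde X=\tilde P\tilde X\oplus N_{\tilde P}$, its adjoint $\tilde S^*$ is block-diagonal for the dual decomposition, and the block acting on $((I-\tilde P)\tilde X)^*\cong(N_{\tilde P})^*$ is exactly $A^*$. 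Thus an eigenvalue $\l$ of $S^*$ in $((I-\tilde P)\tilde X)^*$ is an eigenvalue of $A^*$, and the standard Banach-space identity $\|A^*\|=\|A\|$ together with the norm computation above yields $|\l|\leq\|A^*\|=\|A\|=\d_P(S)$.

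The routine part is case (i); the technical heart is case (ii), where the main obstacle is the correct identification of the dual decomposition and the verification that the action of $\tilde S^*$ on the annihilator-type summand coincides with the adjoint of the restriction $A$. This is precisely the step that uses the commutation hypothesis $S\in\Sigma_P(X)$, since without $PS=SP$ the operator $\tilde S$ need not be block-diagonal and $A$ would not even be defined. I note, finally, that the hypothesis $\l\neq 1$ is not used in the estimate itself; it only isolates the nontrivial part of the spectrum relevant to ergodicity, as the eigenvalue $1$ is carried by the complementary invariant subspace $\tilde P\tilde X$.
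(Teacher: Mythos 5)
Your proposal is correct, but note that this paper does not actually prove the statement: it is quoted verbatim from the earlier work \cite{MA}, so there is no in-paper argument to compare yours against. Judged on its own terms, your argument is sound. The identification $N_{\tilde P}=(I-\tilde P)\tilde X=\ker\tilde P$, the $\tilde S$-invariance of this kernel coming from $PS=SP$, and the computation $\|A\|=\tilde{\d}_{\tilde P}(\tilde S)=\d_P(S)$ (the last equality being exactly the complexification Lemma stated in the paper) together make case (i) a one-line eigenvalue-versus-operator-norm estimate. In case (ii) you correctly pin down the only delicate point: the summand $((I-\tilde P)\tilde X)^*$ in the dual decomposition corresponds, via restriction of functionals, to $\ker\tilde P^*=(I-\tilde P^*)\tilde X^*$, and on it $\tilde S^*$ acts as $A^*$, since $(\tilde S^*\varphi)(x)=\varphi(\tilde Sx)=\varphi(Ax)$ for $x\in N_{\tilde P}$; then $|\l|\leq\|A^*\|=\|A\|=\d_P(S)$. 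It is worth observing that the restriction isomorphism $\ker\tilde P^*\to (N_{\tilde P})^*$ need not be isometric, but this is harmless because eigenvalues are invariant under the algebraic isomorphism and the norm you use for the bound is the genuine dual norm of $(N_{\tilde P})^*$, for which $\|A^*\|=\|A\|$ holds. Your closing remark is also accurate: the hypothesis $\l\neq 1$ plays no role in the inequality itself (if $1$ were an eigenvalue on the complementary subspace one would simply conclude $\d_P(S)=1$, consistent with $\d_P(S)\leq 1$); it matters only for the spectral-gap applications the theorem is used for.
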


\begin{remark}
We notice that spectral radius of uniformly $P$-ergodic operators will be considered in the next section.
\end{remark}

\begin{remark} We stress that there are many works devoted to the spectral properties of Markov operators (see for example, \cite{A,G}). One of them is its spectral gap. Namely, we say that a Markov operator $T$  on $X$ (here $X$ is a complex abstract state space) has
a \textit{spectral gap}, if one has $\|T(I-P)\|<1$, where $P$ is a Markov projection such that $PT=TP=P$. This is clearly equivalent to $\d_P(T)<1$.  When $X$ is taken as a non-commutative $L_p$-spaces, the spectral gap of Markov operator has been
recently studied in \cite{CPR}. In the classical setting, this gap has been extensively investigated by many authors (see for example, \cite{KM}).

We can stress that if $T$ has a spectral gap, then 1 has to be an isolated point of the spectrum. Indeed, choose an arbitrary $\i>0$ with $\i<1-\d_P(T)$. Assume that $\l$ is an element of the spectrum of $T$ such that $|1-\l|<\i$ with
corresponding eigenvector $x$. Then, it is clear that $y=x-Px$ belongs to $N_P$, therefore, one gets
\begin{eqnarray*}
Ty=Tx-TPx=Tx-PTx=\l(x-Px)=\l y
\end{eqnarray*}
hence, $y$ is an eigenvector with eigenvalue of $\l$, and we have
$$
\|Ty\|=|\l|\|y\|>\d_p(T)\|y\|,
$$
which contradicts to $\d_P(T)<1$.

We just emphasize that if $T$ has a spectral gap, then one has $\|T^n-P\|\to 0$, i.e. $T$ is uniform $P$-ergodic. Next section will be devoted to this notion.
\end{remark}

\begin{definition}
Let $T$ be a bounded operator on $X$. Then one defines
\begin{itemize}
\item[(i)] spectrum of  $T$:
$$\sigma (T) = \{\lambda\in\mathbb C : \lambda I - T \ \text{does not have a continuous inverse}\};$$
\item[(ii)] point spectrum of $T$: $$\sigma_p (T) = \{\lambda\in\mathbb C : \lambda x = Tx  \  \text{for} \  x\neq 0\};$$
\item[(iii)] the approximate spectrum of $T$:  $$\sigma_{app} (T) = \{\lambda\in\mathbb C : \left\| (\lambda I - T) x_n \right\| \to 0 \ \text{for some sequence} \  \{x_n\}\  \text{with} \  \left\| x_n \right\| = 1\}$$
\end{itemize}
\end{definition}

\textit{The spectral radius} of $T$ is defined to be $r(T)= \sup_{\lambda\in\sigma (T)} |\lambda |$ .
Let $T$ be a Markov operator, then by \textit{the rate of convergence},  we mean
\begin{equation}\label{beta}
\beta^* = \sup \{|\lambda |: \lambda \in \sigma (T), \lambda \neq 1\}.
\end{equation}


\section{Spectral condition for uniform $P$-ergodicity}

In this section, we are going to establish spectral conditions for the uniform $P$-ergodicity of Markov operators.  Basically, we study the best possible rate of convergence
$\|T^n-P\|$ while $T$ is uniform $P$-ergodic.  In what follows, we always assume that abstract state $X$ is considered over the complex field.

\begin{lemma} \label{lemmaspectrum1}
Let $(X, X_+, \ck, f)$ be an abstract state space and let $P$ be a Markov projection on $X$.
If $T \in \Sigma_P(X)$ with $TP =P$ and $\d_P (T^{n_0}) < 1$ for some  $n_0 \in \mathbb N$ then $r(T-P) < 1$.
\end{lemma}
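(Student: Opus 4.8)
The plan is to reduce everything to the behaviour of the generalized Dobrushin coefficient $\delta_P(T^n)$, exploiting that $T-P$ essentially acts only on $N_P$. First I would record the algebraic consequences of the hypotheses: since $T\in\Sigma_P(X)$ we have $PT=TP$, and combined with $TP=P$ this forces $PT=TP=P$ and, by induction, $T^nP=P$ for every $n$. A short induction then gives the operator identity $(T-P)^n=T^n-P$ for all $n\ge 1$: indeed $(T^n-P)(T-P)=T^{n+1}-T^nP-PT+P^2=T^{n+1}-P-P+P=T^{n+1}-P$. Moreover, since $T^nP=P$, one may rewrite $T^n-P=T^n(I-P)$, which is the form that couples directly to $\delta_P$.

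Next I would bound $\|T^n-P\|$ by the Dobrushin coefficient. Put $H=I-P$; then $PH=P-P^2=0$, so Theorem~\ref{Dob}(iv) applies and yields $\|T^nH\|\le\delta_P(T^n)\|H\|$. Together with the identity $T^n-P=T^n(I-P)=T^nH$ this gives
\[
\|T^n-P\|\le\|I-P\|\,\delta_P(T^n).
\]

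The third step is to show that $\delta_P(T^n)$ decays geometrically. Since $T\in\Sigma_P(X)$, every power $T^m$ again commutes with $P$, hence $T^m\in\Sigma_P(X)$; this is exactly what is needed to apply the submultiplicativity in Theorem~\ref{Dob}(v). Writing $n=kn_0+r$ with $0\le r<n_0$ and iterating (v), I obtain $\delta_P(T^n)\le\delta_P(T^{n_0})^k\,\delta_P(T^r)\le\delta_P(T^{n_0})^k$, using $\delta_P(T^r)\le 1$ from part (i). Setting $\alpha=\delta_P(T^{n_0})<1$ and $\beta=\alpha^{1/n_0}<1$, the estimate $k\ge n/n_0-1$ turns this into $\delta_P(T^n)\le\alpha^{-1}\beta^{n}$, so that $\|T^n-P\|\le C\beta^n$ with $C=\alpha^{-1}\|I-P\|$.

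Finally, Gelfand's spectral radius formula applied to $T-P$ gives $r(T-P)=\lim_{n\to\infty}\|(T-P)^n\|^{1/n}=\lim_{n\to\infty}\|T^n-P\|^{1/n}\le\beta<1$, which is the claim. The only step requiring genuine care is the second one: the whole argument hinges on observing that $T^n-P$ can be written as $T^n(I-P)$ and is therefore governed by $\delta_P$, so that the hypothesis $\delta_P(T^{n_0})<1$ (rather than control of $\|T^{n_0}\|$, which equals $1$ for a Markov operator) can be brought to bear; once that link is made, submultiplicativity and Gelfand's formula finish the proof routinely.
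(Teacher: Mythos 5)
Your proof is correct, and it shares the paper's overall skeleton: the algebraic identities $PT=TP=P$ and $(T-P)^n=T^n-P=T^n(I-P)$, control of $\|T^n(I-P)\|$ by $\delta_P(T^n)$ via Theorem \ref{Dob}, and Gelfand's formula. Where you diverge is in how the decay of $\delta_P(T^n)$ is obtained. The paper pairs the bound $\|T^n(I-P)\|\le 2\,\delta_P(T^n)$ with the reverse estimate $\delta_P(T^n)\le\|T^n-P\|$ (Theorem \ref{Dob}(ii) applied with $S=P$, using $\delta_P(P)=0$) and invokes \cite[Proposition 2.11]{MA}, producing a two-sided sandwich that yields the exact identity $r(T-P)=\lim_{n\to\infty}(\delta_P(T^n))^{1/n}$; this equality is precisely what the paper reuses later, in the implication (iii)$\Rightarrow$(ii) of Theorem \ref{rT} (via $r(T-P)=\inf_n(\delta_P(T^n))^{1/n}$) and in the discussion of best rates after Theorem \ref{rateofspectrumtheorem}. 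You instead prove only the one-sided bound $\|T^n-P\|\le\|I-P\|\,\delta_P(T^n)$ and get geometric decay directly from submultiplicativity (Theorem \ref{Dob}(v) together with (i)) and the division algorithm $n=kn_0+r$; this is fully self-contained (no appeal to the external proposition in \cite{MA}) and entirely sufficient for the lemma as stated, at the cost of yielding only the inequality $r(T-P)\le(\delta_P(T^{n_0}))^{1/n_0}$ rather than the limit formula. One small repair: your constant $C=\alpha^{-1}\|I-P\|$ tacitly assumes $\alpha=\delta_P(T^{n_0})>0$; in the degenerate case $\alpha=0$, your second step already gives $\|T^n-P\|=0$ for all $n\ge n_0$, hence $r(T-P)=0$, so you should simply dispose of that case separately.
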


\begin{proof}
From Theorem \ref{Dob}(i),(ii) we find
\begin{eqnarray}\label{pp}
&&\|T^n(I-P)\| \leq \d_P (T^n)\|I - P\|  \leq 2 \d_P (T^n) \  \  \  \text{and} \  \  \  \d_P (T^n) \leq  \|T^n - P\|.
\end{eqnarray}
By \cite[Proposition 2.11]{MA}, if $\d_P (T^{n_0} ) < 1$ then $\lim_{n\to\infty} \|T^n (I - P)\| =0$. Hence, using \eqref{pp} and $PT=TP=P$ one gets
\begin{eqnarray*}
r(T-P) &=& \lim_{n\to\infty} \|(T-P)^n\|^{1/n} \\
&=& \lim_{n\to\infty} \|T^n-P \|^{1/n} \\
&=& \lim_{n\to\infty} \|T^n- T^nP \|^{1/n} \\
&\leq& \lim_{n\to\infty} ( 2 \d_P (T^n))^{1/n} \\
&\leq& \limsup_{n\to\infty} ( 2 \d_P (T^n))^{1/n} \\
&\leq& \lim_{n\to\infty} ( 2 \|T^n - P\|)^{1/n} = r(T-P).
\end{eqnarray*}
This implies
$$
r(T-P)= \lim_{n\to\infty} (\d_P (T^n))^{1/n}.
$$
Consequently, we infer that $\d_P (T^{n_0}) < 1$,  for some $n_0 \in\mathbb N$,
if and only if $$r(T-P) <1.$$ This completes the proof.
\end{proof}

\begin{remark} We point out that, in reality,  due to $TP=PT=P$, to calculate the spectrum of $T-P$ it is enough to consider $T-P$ over $(I-P)(X)$.
\end{remark}

\begin{proposition}\label{theoremspectralradius1}
Let $T \in \Sigma (X)$ and $T$ be uniformly-P-ergodic. Then $r(T-P) < 1$.
\end{proposition}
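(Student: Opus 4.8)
The plan is to reduce the statement to Lemma~\ref{lemmaspectrum1}, whose hypotheses are $T\in\Sigma_P(X)$, $TP=P$, and $\d_P(T^{n_0})<1$ for some $n_0\in\mathbb N$. The first task, and the genuinely new input needed here, is to extract the algebraic relations $TP=PT=P$ directly from the uniform $P$-ergodicity; once these are in hand the rest is a short deduction.

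To obtain these relations I would argue by continuity. Since $T$ is Markov we have $\|T\|=1$, hence $\|T^{n+1}-TP\|=\|T(T^n-P)\|\le\|T^n-P\|\to 0$ and likewise $\|T^{n+1}-PT\|=\|(T^n-P)T\|\le\|T^n-P\|\to 0$. But uniform $P$-ergodicity also gives $\|T^{n+1}-P\|\to 0$. Comparing the limits of the single norm-convergent sequence $(T^{n+1})$ then forces $TP=P$ and $PT=P$; in particular $PT=TP$, so $T\in\Sigma_P(X)$, and $P$ is a Markov projection as already noted just after the definition of uniform $P$-ergodicity.

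With $TP=PT=P$ established, the remaining hypothesis of Lemma~\ref{lemmaspectrum1} follows immediately: because $\|T^n-P\|\to 0$ we may fix $n_0$ with $\|T^{n_0}-P\|<1$, and Theorem~\ref{Dob}(ii) yields $\d_P(T^{n_0})\le\|T^{n_0}-P\|<1$. Applying Lemma~\ref{lemmaspectrum1} then gives $r(T-P)<1$, as desired.

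Alternatively, one can bypass Lemma~\ref{lemmaspectrum1} and argue in a self-contained way: the relations $TP=PT=P$ give $(T-P)^n=T^n-P$ for all $n\ge 1$ by a one-line induction, so by the spectral mapping theorem $r(T-P)^{n_0}=r\big((T-P)^{n_0}\big)=r(T^{n_0}-P)\le\|T^{n_0}-P\|<1$, whence $r(T-P)<1$. I expect the only delicate point to be the very first step: one must check that $\|T^n-P\|\to 0$ really delivers \emph{both} commutation relations, not merely $TP=P$, since both are needed to collapse $(T-P)^n$ to $T^n-P$ and to place $T$ in $\Sigma_P(X)$.
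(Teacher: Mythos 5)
Your proposal is correct and takes essentially the same route as the paper: the paper's proof simply asserts that uniform $P$-ergodicity yields $TP=PT=P$ and some $n_0$ with $\d_P(T^{n_0})<1$, and then invokes Lemma~\ref{lemmaspectrum1}; your argument is exactly this reduction, with the details the paper leaves implicit (the norm-limit derivation of $TP=PT=P$ and the bound $\d_P(T^{n_0})\le\|T^{n_0}-P\|$ via Theorem~\ref{Dob}(ii)) correctly filled in. Your self-contained alternative via $(T-P)^{n}=T^{n}-P$ and $r(T-P)^{n_0}\le\|T^{n_0}-P\|<1$ is also valid and bypasses the lemma entirely, but the primary route matches the paper.
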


\begin{proof}
Since $T$ is uniformly $P$-ergodic, then $TP=PT=P$ and there exists $n_0 \in\mathbb N$ such that $\d_P (T^{n_0}) <1$ . So Lemma \ref{lemmaspectrum1} implies the assertion.
\end{proof}

\begin{theorem}\label{rT}
Let $T\in \Sigma_P (X)$. Then the following conditions are equivalent.
\begin{itemize}
\item[(i)] $T$ is uniformly-P-ergodic;
\item[(ii)] $TP=P$ and there exists $n_0 \in \mathbb N$ such that $\d_P (T^{n_0}) < 1$;
\item[(iii)] $TP=P$ and $r (T-P) < 1$.
\end{itemize}
\end{theorem}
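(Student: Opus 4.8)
The plan is to prove the three implications in a cycle $(i)\Rightarrow(ii)\Rightarrow(iii)\Rightarrow(i)$, leaning on the machinery already established. The implications $(i)\Rightarrow(ii)$ and $(ii)\Rightarrow(iii)$ are essentially already packaged for us: uniform $P$-ergodicity forces $TP=PT=P$ (noted right after the definition of uniform $P$-ergodicity) and gives $\|T^{n_0}-P\|<1$ for large $n_0$, whence $\d_P(T^{n_0})\leq\|T^{n_0}-P\|<1$ by Theorem~\ref{Dob}(ii); this is exactly Proposition~\ref{theoremspectralradius1} together with its proof. For $(ii)\Rightarrow(iii)$, I would simply invoke Lemma~\ref{lemmaspectrum1}, whose hypotheses ($T\in\Sigma_P(X)$, $TP=P$, and $\d_P(T^{n_0})<1$) match $(ii)$ verbatim and whose conclusion is $r(T-P)<1$.

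The substantive direction is $(iii)\Rightarrow(i)$: from $TP=P$ and $r(T-P)<1$ I must recover uniform $P$-ergodicity, i.e. $\|T^n-P\|\to 0$. The key observation is that since $TP=PT=P$ and $P^2=P$, one computes $(T-P)^n=T^n-P$ for all $n\geq 1$ by induction (the cross terms collapse because $TP=PT=P$ and $P$ is idempotent). Therefore $\|T^n-P\|=\|(T-P)^n\|$, and the spectral radius formula $r(T-P)=\lim_{n\to\infty}\|(T-P)^n\|^{1/n}$ gives that $\|T^n-P\|^{1/n}\to r(T-P)<1$. Fixing any $\beta$ with $r(T-P)<\beta<1$, there is $N$ such that $\|T^n-P\|\leq\beta^n$ for all $n\geq N$, and in particular $\|T^n-P\|\to 0$, which is precisely $(i)$.

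The one point requiring genuine care — and the step I expect to be the main obstacle — is verifying that $(T-P)^n=T^n-P$ cleanly under the standing hypotheses of $(iii)$. Here $T\in\Sigma_P(X)$ only guarantees $TP=PT$, and $(iii)$ additionally supplies $TP=P$; combining these yields $PT=TP=P$, which is exactly what the binomial-type collapse needs. I would write the inductive step explicitly:
\begin{eqnarray*}
(T-P)^{n+1}=(T^n-P)(T-P)=T^{n+1}-T^nP-PT+P^2=T^{n+1}-P,
\end{eqnarray*}
using $T^nP=P$, $PT=P$, and $P^2=P$. Once this algebraic identity is secured, the rest is the routine application of Gelfand's spectral radius formula already employed in the proof of Lemma~\ref{lemmaspectrum1}, so no further obstacle arises.
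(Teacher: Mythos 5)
Your proof is correct, but it closes the equivalence differently from the paper. The paper disposes of (i)$\Leftrightarrow$(ii) by citing \cite[Corollary 4.7]{MA}, gets (ii)$\Rightarrow$(iii) from Lemma~\ref{lemmaspectrum1} (as you do), and then closes the loop with (iii)$\Rightarrow$(ii): it invokes the identity $\lim_{n\to\infty}(\d_P(T^n))^{1/n}=r(T-P)=\inf_n(\d_P(T^n))^{1/n}$ and concludes that $r(T-P)<1$ forces $\d_P(T^{n_0})<1$ for some $n_0$. You instead run the cycle (i)$\Rightarrow$(ii)$\Rightarrow$(iii)$\Rightarrow$(i), proving (i)$\Rightarrow$(ii) by hand (via $TP=PT=P$ and $\d_P(T^{n_0})\leq\|T^{n_0}-P\|$ from Theorem~\ref{Dob}(ii), essentially the content of Proposition~\ref{theoremspectralradius1}) and, crucially, proving (iii)$\Rightarrow$(i) directly: the algebraic collapse $(T-P)^n=T^n-P$ under $PT=TP=P$ plus Gelfand's formula gives $\|T^n-P\|^{1/n}\to r(T-P)<1$, hence geometric decay. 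What your route buys is self-containedness: you avoid the external citation to \cite{MA} for (ii)$\Rightarrow$(i), and you sidestep a slight looseness in the paper, since the displayed identity the paper uses for (iii)$\Rightarrow$(ii) was derived inside the proof of Lemma~\ref{lemmaspectrum1} under the hypothesis $\d_P(T^{n_0})<1$ (it does hold in general, given $TP=PT=P$, via \eqref{pp} and submultiplicativity from Theorem~\ref{Dob}(v), but the paper does not say so). What the paper's route buys is the sharper quantitative by-product $r(T-P)=\inf_n(\d_P(T^n))^{1/n}$, which it reuses after Theorem~\ref{rateofspectrumtheorem}; your argument yields the convergence rate but not this infimum characterization. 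One cosmetic slip: the fact $TP=PT=P$ for uniformly $P$-ergodic $T$ is not quite ``noted right after the definition'' (only the Markovianity of $P$ is noted there); it is asserted in the proof of Proposition~\ref{theoremspectralradius1}, and your own limit argument $T^{n+1}\to TP$ and $T^{n+1}\to P$ would justify it if you want to be fully self-contained.
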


\begin{proof} The implications (i)$\Leftrightarrow$ (ii) follows from \cite[ Corollary 4.7]{MA}. The implication (ii)$\Rightarrow $(iii) immediately follows
from Lemma \ref{lemmaspectrum1}.
Let us establish (iii)$\Rightarrow$ (ii). Due to
$$\lim_{n\to\infty} (\d_P (T^n) )^{1/n} = r(T-P) = \inf_n (\d_P (T^n))^{1/n}
$$ and $r(T-P) < 1$, then there exists $n_0 \in\mathbb N$ such that $\d_P (T^{n_0}) <1$.
 \end{proof}

By  \cite[ Proposition 4.10]{MA}, we obtain the following result.

\begin{corollary}
Let $T \in\Sigma_P (X)$ with $TP=P$. Then $T$ is weakly-P-ergodic if and only if $r(T-P) <1$.
\end{corollary}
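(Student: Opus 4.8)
The plan is to recognise this statement as the weak counterpart of Theorem \ref{rT} and to deduce it from the spectral identity already extracted in the proofs of Lemma \ref{lemmaspectrum1} and Theorem \ref{rT}. The central fact is that, because $T\in\Sigma_P(X)$ and $TP=P$, the chain of estimates in \eqref{pp} (together with $(T-P)^n=T^n-P$ and $T^nP=P$) yields the equality
\[
r(T-P)=\lim_{n\to\infty}\big(\d_P(T^n)\big)^{1/n}=\inf_{n}\big(\d_P(T^n)\big)^{1/n},
\]
where the second equality rests on the submultiplicativity $\d_P(T^{n+m})\le \d_P(T^n)\,\d_P(T^m)$ from Theorem \ref{Dob}(v) combined with Fekete's lemma. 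I would emphasise that this identity is unconditional once $TP=PT=P$, and it is the only analytic input the corollary needs; the bound $\d_P(T^m)\le 1$ from Theorem \ref{Dob}(i) is also used tacitly.

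With this in hand both directions are short. First I would treat the forward implication: if $T$ is weakly $P$-ergodic, then $\d_P(T^n)\to 0$, so there is some $n_0$ with $\d_P(T^{n_0})<1$, and taking $n=n_0$ in the infimum formula gives $r(T-P)\le \big(\d_P(T^{n_0})\big)^{1/n_0}<1$. For the converse I would assume $r(T-P)<1$ and fix any $\beta$ with $r(T-P)<\beta<1$; by the limit formula there is an $N$ such that $\big(\d_P(T^n)\big)^{1/n}<\beta$ for all $n\ge N$, whence $\d_P(T^n)<\beta^n\to 0$, i.e.\ $T$ is weakly $P$-ergodic. Alternatively, and this is presumably the route signalled by the citation to \cite[Proposition 4.10]{MA}, one reduces everything to Theorem \ref{rT}: that proposition records that, under $TP=P$, weak $P$-ergodicity is equivalent to the existence of some $n_0\in\mathbb N$ with $\d_P(T^{n_0})<1$ (forward being immediate, and backward using $\d_P(T^{kn_0+r})\le \d_P(T^{n_0})^k\,\d_P(T^{r})\to 0$ via submultiplicativity and the bound $\d_P(T^r)\le 1$). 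This is exactly condition (ii) of Theorem \ref{rT}, which is there shown equivalent to condition (iii), namely $r(T-P)<1$; composing the two equivalences gives the corollary.

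I do not expect a genuine obstacle here, since the hard analytic work—the spectral identity and the Fekete-type passage to the infimum—has already been carried out inside Lemma \ref{lemmaspectrum1} and Theorem \ref{rT}. The one point demanding care is to confirm that the equality $r(T-P)=\lim_{n}\big(\d_P(T^n)\big)^{1/n}$ does \emph{not} secretly depend on the hypothesis $\d_P(T^{n_0})<1$ that appears in Lemma \ref{lemmaspectrum1}; inspecting that argument shows its inequality chain relies only on \eqref{pp} and $TP=PT=P$, so the identity is available unconditionally and the corollary is a genuinely formal consequence of the material already established.
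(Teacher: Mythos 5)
Your proposal is correct and follows essentially the paper's own route: the paper's proof is a one-line citation of \cite[Proposition 4.10]{MA} (weak $P$-ergodicity $\Leftrightarrow$ $\d_P(T^{n_0})<1$ for some $n_0$) combined with the equivalence (ii)$\Leftrightarrow$(iii) of Theorem \ref{rT}, which is exactly your second argument. Your primary, self-contained argument merely inlines the same ingredients---the identity $r(T-P)=\lim_{n\to\infty}\big(\d_P(T^n)\big)^{1/n}=\inf_{n}\big(\d_P(T^n)\big)^{1/n}$ (whose independence from the hypothesis $\d_P(T^{n_0})<1$ you correctly verified) together with the submultiplicativity from Theorem \ref{Dob}(v)---so there is no substantive difference.
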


\begin{lemma}\label{approximatespectrum}
Let $P$ be a Markov projection and $T\in \Sigma_P (X)$ with $TP=P$. Then for $\lambda\neq 0$ and $\lambda\neq 1$, $\lambda \in \sigma_{app} ( T-P)$
if and only if $\lambda \in \sigma_{app} (T)$.
\end{lemma}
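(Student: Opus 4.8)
The plan is to exploit the commutation relations and to transport approximate eigenvectors between $T$ and $T-P$ by applying the complementary projection $I-P$. First I would record the algebraic identities that the hypotheses $PT=TP$ and $TP=P$ yield, namely $PT=TP=P$, and consequently $P(T-P)=PT-P^2=0$ together with $(T-P)P=TP-P^2=0$. These say that $T-P$ annihilates the range of $P$ and maps $X$ into $N_P$, while on $N_P$ the operators $T$ and $T-P$ coincide (there $Py=0$, so $(T-P)y=Ty$). Conceptually, $T$ restricted to the range of $P$ is the identity and $T-P$ restricted there is zero, which is precisely why the two approximate spectra can differ only at the excluded values $\lambda=1$ and $\lambda=0$.

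For the inclusion $\sigma_{app}(T-P)\subseteq\sigma_{app}(T)$ with $\lambda\neq 0$, I would start from a sequence of unit vectors $x_n$ with $\|(T-P-\lambda I)x_n\|\to 0$. Applying $P$ and using $P(T-P)=0$ gives $\|\lambda Px_n\|\to 0$, whence $\|Px_n\|\to 0$ precisely because $\lambda\neq 0$. Setting $y_n=(I-P)x_n\in N_P$, it follows that $\|y_n\|\to 1$, so $y_n$ is eventually nonzero. Since $(T-P)P=0$, one has $Ty_n=(T-P)y_n=(T-P)x_n$, and therefore $(T-\lambda I)y_n=(T-P-\lambda I)x_n+\lambda Px_n$; the first summand tends to $0$ by hypothesis and the second because $Px_n\to 0$. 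Normalizing $y_n$ then exhibits $\lambda\in\sigma_{app}(T)$.

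The reverse inclusion for $\lambda\neq 1$ is entirely symmetric, using $PT=P$ and $TP=P$ in place of the two identities above. Starting from unit vectors $x_n$ with $\|(T-\lambda I)x_n\|\to 0$, applying $P$ gives $\|(1-\lambda)Px_n\|\to 0$, hence $\|Px_n\|\to 0$ because now $\lambda\neq 1$. With $y_n=(I-P)x_n$ one computes, using $TP=P$, that $(T-P-\lambda I)y_n=(T-\lambda I)x_n-(1-\lambda)Px_n$, and both terms tend to $0$; normalizing yields $\lambda\in\sigma_{app}(T-P)$.

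I do not expect a genuine obstacle: the argument reduces to a short projection computation. The one point that needs care is the normalization step, namely guaranteeing that the transported vectors $y_n=(I-P)x_n$ do not collapse to zero; this is exactly secured by the estimate $\|Px_n\|\to 0$, which forces $\|y_n\|\to 1$ and makes $y_n/\|y_n\|$ a legitimate sequence of unit vectors. It is worth emphasizing that the two excluded values enter in a complementary way — $\lambda\neq 0$ forces $Px_n\to 0$ in the first direction and $\lambda\neq 1$ forces it in the second — so the hypotheses $\lambda\neq 0$ and $\lambda\neq 1$ are exactly what this method requires.
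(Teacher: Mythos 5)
Your proof is correct and takes essentially the same approach as the paper: apply $P$ to the approximating sequence, use $PT=TP=P$ together with the exclusions $\lambda\neq 0$ (forward direction) and $\lambda\neq 1$ (reverse direction) to force $\|Px_n\|\to 0$, then transfer the sequence between $T$ and $T-P$. The only difference is that you pass to $y_n=(I-P)x_n$ and renormalize, whereas the paper keeps the original unit vectors $x_n$ and concludes directly from the triangle inequality applied to $(T-\lambda I)x_n=(T-P-\lambda I)x_n+Px_n$, so your normalization step is harmless but unnecessary.
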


\begin{proof}
Assume that $\lambda\in\sigma_{app} (T-P) $. Then, there exists $x_n$ with $\|x_n\|=1$ such that
$\|(T-P-\lambda I) x_n\| \to 0$ as $n\to\infty$. This implies $\|P (T-P-\lambda I) x_n\| \to 0$ as $n\to\infty$. Due to
$TP=PT=P$, one gets $\|\lambda P x_n\| \to 0$. Since $\lambda\neq 0$, we have $\|P x_n\|\to 0$ as  $n\to\infty$. From
$$\|(T-\lambda I) x_n\|\leq \|(T-P-\lambda I) x_n\|+\|P x_n\|$$ it follows that $\|(T-\lambda I) x_n\| \to 0$, i.e. $\lambda \in\sigma_{app} (T)$.

Conversely, let us assume that $\lambda\in\sigma_{app} (T)$, which means $\|(T- \lambda I) x_n\| \to 0$ as $n\to\infty$. Therefore,
$$
\|P ( T-\lambda I) x_n\| = \|(PT- \lambda P) x_n\| = \|( P - \lambda P) x_n\| = \|(1-\lambda) P x_n\| \to 0$$
as $n\to\infty$. Due to  $\lambda\neq 1$, we obtain $\|P x_n\| \to 0$. Hence
$$
\|( T-P- \lambda I) x_n\| \leq \|( T- \lambda I) x_n\|+ \|P x_n\|\to 0 \ \ \textrm{as}\ \ n\to\infty.
$$
This means $\lambda \in\sigma_{app} (T-P)$.
\end{proof}

\begin{theorem}\label{rateofspectrumtheorem}
Let $T \in \Sigma (X)$ and let $P$ be a projection. If $T$ is uniformly $P$-ergodic, then $\beta^* = r(T-P)$, where $\beta^*$ is given by \eqref{beta}.
\end{theorem}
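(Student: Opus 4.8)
The plan is to read off both numbers from the approximate point spectrum, using Lemma \ref{approximatespectrum} as the transfer device. The one external fact I would invoke is standard: for any bounded operator $A$ on a complex Banach space the topological boundary of $\sigma(A)$ is contained in $\sigma_{app}(A)$, and since $\sigma(A)$ is compact the value $r(A)$ is attained at a boundary point; hence $r(A)=\max\{|\lambda|:\lambda\in\sigma_{app}(A)\}$. I would also record at the outset the structural input: as $T$ is uniformly $P$-ergodic, Proposition \ref{theoremspectralradius1} gives $TP=PT=P$ and $r(T-P)<1$. Since $\|T\|=1$ and $TP=P$ forces $Tx=x$ on $PX$, we have $1\in\sigma(T)$ and $r(T)=1$.

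For the inequality $r(T-P)\le\beta^{*}$ I would argue directly. If $r(T-P)=0$ there is nothing to prove, so assume $r(T-P)>0$ and pick $\lambda_{0}\in\sigma_{app}(T-P)$ with $|\lambda_{0}|=r(T-P)$. Then $\lambda_{0}\neq 0$, and $|\lambda_{0}|=r(T-P)<1$ gives $\lambda_{0}\neq 1$; Lemma \ref{approximatespectrum} now places $\lambda_{0}\in\sigma_{app}(T)\subseteq\sigma(T)$, so by the definition \eqref{beta} of $\beta^{*}$ we get $r(T-P)=|\lambda_{0}|\le\beta^{*}$.

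The reverse inequality $\beta^{*}\le r(T-P)$ is where the main obstacle sits: $\beta^{*}$ is defined through the full spectrum $\sigma(T)$, whereas Lemma \ref{approximatespectrum} only transfers \emph{approximate} eigenvalues, so I must first check that the spectral value realizing $\beta^{*}$ is an approximate one and is different from $0$ and $1$. To do this I would use the commutation $TP=PT=P$ to split $X=PX\oplus(I-P)X$ into closed $T$-invariant subspaces, on which $T$ acts as the identity and as $T_{0}:=T|_{(I-P)X}$ respectively; this yields $\sigma(T)=\{1\}\cup\sigma(T_{0})$ with $\sigma(T_{0})\subseteq\{|\lambda|\le r(T-P)<1\}$. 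Consequently $1$ is isolated in $\sigma(T)$, it is the unique spectral value of modulus $1$, and $\sigma(T)\setminus\{1\}$ is compact, so the supremum in \eqref{beta} is attained at some $\lambda^{*}$ with $|\lambda^{*}|=\beta^{*}<1$. Because $\lambda^{*}$ has maximal modulus in $\sigma(T)\setminus\{1\}$ and $1$ is detached from it, $\lambda^{*}$ lies on $\partial\sigma(T)\subseteq\sigma_{app}(T)$; assuming $\beta^{*}>0$ (the case $\beta^{*}=0$ being trivial) we have $\lambda^{*}\neq 0,1$, and Lemma \ref{approximatespectrum} sends $\lambda^{*}\in\sigma_{app}(T-P)\subseteq\sigma(T-P)$, giving $\beta^{*}=|\lambda^{*}|\le r(T-P)$. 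Combining the two inequalities yields $\beta^{*}=r(T-P)$.

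I would add a closing remark: the same splitting actually shows $\sigma(T)\setminus\{1\}=\sigma(T_{0})=\sigma(T-P)\setminus\{0\}$ outright, from which the theorem is immediate; I nonetheless route the argument through Lemma \ref{approximatespectrum} to stay inside the framework the paper has set up, and the only genuinely delicate point is the passage from the full spectrum to the approximate spectrum in the reverse inequality.
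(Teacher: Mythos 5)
Your proof is correct and follows essentially the same route as the paper: both arguments rest on the standard fact that the boundary of the spectrum of a bounded operator lies in its approximate point spectrum, and both use Lemma \ref{approximatespectrum} as the device transferring approximate spectral values between $T-P$ and $T$. If anything, you are more careful than the paper at exactly the point you flag as delicate: the paper's proof ends with the equality $\sup\{|\lambda|:\lambda\in\sigma_{app}(T),\ \lambda\neq 1\}=\sup\{|\lambda|:\lambda\in\sigma(T),\ \lambda\neq 1\}$ asserted without detail, and your splitting $X=PX\oplus(I-P)X$, which gives $\sigma(T)=\{1\}\cup\sigma(T_{0})$ with $\sigma(T_{0})$ contained in the disc of radius $r(T-P)<1$, is precisely the argument needed to see that the supremum defining $\beta^{*}$ is attained at a boundary point of $\sigma(T)$ different from $0$ and $1$. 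One small caution about your closing remark: the identity $\sigma(T_{0})=\sigma(T-P)\setminus\{0\}$ can fail when $0\in\sigma(T_{0})$ (for instance $T=P$ gives $T_{0}=0$, so $\sigma(T_{0})=\{0\}$ while $\sigma(T-P)\setminus\{0\}=\emptyset$); the correct statement $\sigma(T-P)=\{0\}\cup\sigma(T_{0})$ still yields the theorem directly, so this does not affect your proof, which in any case routes through the Lemma.
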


\begin{proof} According to Theorem \ref{rT}, we infer $r(T-P)<1$. So, $1\notin \sigma_{app} (T-P)$.
For the bounded operator, the boundary of spectrum is a subset of approximate point spectrum. Hence, by by Lemma \ref{approximatespectrum}, we obtain
\begin{eqnarray*}
r(T-P) &=& \sup \{|\lambda| : \lambda \in \sigma_{app} (T-P)\} \\
&=& \sup \{|\lambda| : \lambda \in \sigma_{app} (T) ,  \lambda \neq 1\} \\
&=& \sup \{|\lambda| : \lambda \in \sigma (T), \lambda \neq 1\}.
\end{eqnarray*}
This completes the proof.
\end{proof}

From this theorem we infer that how the spectral radius of $T-P$ relates to the spectral radius of the operator $T$.

\begin{remark}We point out that when $X=\br^n$ and $P$ is a rank one projection, then it is well-known \cite{Num} that $\beta^*$ is the best
possible rate of the convergence of $T^n$ to $P$. If $X=\ell_1$, then there is $\beta>\beta^*$ and $C>0$ such that $\|T^n-P\|\leq C\beta^n$ \cite{IL}.
Next
result refitments this fact in a general setting.
\end{remark}

\begin{corollary}
Let $T \in \Sigma (X)$ and $P$ be a projection. If $T$ is uniformly P-ergodic,  then $\|T^n - P\| = (\beta^* + \alpha_n)^n$ where $\alpha_n \to 0$ as $n\to \infty$.
\end{corollary}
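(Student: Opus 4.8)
The plan is to prove the Corollary as a direct consequence of Theorem \ref{rateofspectrumtheorem} together with the spectral radius formula (Gelfand's formula). The key observation is that since $T$ is uniformly $P$-ergodic, we have $TP = PT = P$, and therefore the powers of $T-P$ collapse nicely: using $(T-P)^n = T^n - P$ (which follows from $TP=PT=P$ by an elementary induction, exactly as computed in the proof of Lemma \ref{lemmaspectrum1}), the spectral radius of $T-P$ becomes
\[
r(T-P) = \lim_{n\to\infty} \|(T-P)^n\|^{1/n} = \lim_{n\to\infty} \|T^n - P\|^{1/n}.
\]
By Theorem \ref{rateofspectrumtheorem} the left-hand side equals $\beta^*$, so we obtain $\lim_{n\to\infty}\|T^n-P\|^{1/n} = \beta^*$.

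First I would record the identity $(T-P)^n = T^n - P$ and note that Proposition \ref{theoremspectralradius1} guarantees $r(T-P) < 1$, so $\beta^* = r(T-P) \in [0,1)$ is a genuine nonnegative real number; this keeps the $n$-th roots below well defined since $\|T^n - P\| \geq 0$. Next I would simply set
\[
\alpha_n = \|T^n - P\|^{1/n} - \beta^*,
\]
so that by construction $\|T^n - P\| = (\beta^* + \alpha_n)^n$, which is precisely the claimed form. The entire content of the statement then reduces to showing $\alpha_n \to 0$, and this is immediate: the Gelfand limit above says $\|T^n-P\|^{1/n} \to \beta^*$, hence $\alpha_n = \|T^n-P\|^{1/n} - \beta^* \to 0$.

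I do not expect any genuine obstacle here, since the corollary is essentially a restatement of Theorem \ref{rateofspectrumtheorem} in multiplicative form. The only mild subtlety worth a sentence is the degenerate case $\beta^* = 0$: if $r(T-P)=0$ then $\|T^n-P\|^{1/n}\to 0$, and the formula still holds with $\alpha_n = \|T^n-P\|^{1/n}$; one should also note that whenever $\|T^n-P\|=0$ for some $n$ the corresponding $\alpha_n$ is simply $-\beta^*$ and the identity $(\beta^*+\alpha_n)^n = 0$ remains valid, so no division issues arise. A final remark I would add is interpretive rather than technical: this form makes transparent that $\beta^*$ is the asymptotically sharp geometric rate governing $\|T^n-P\|$, refining the earlier qualitative statement $\|T^n-P\|\le C\beta^n$ by pinning down the optimal base $\beta = \beta^*$ up to the vanishing perturbation $\alpha_n$.
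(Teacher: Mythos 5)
Your proof is correct and follows essentially the same route as the paper: define $\alpha_n = \|T^n - P\|^{1/n} - \beta^*$ and conclude $\alpha_n \to 0$ from Theorem \ref{rateofspectrumtheorem} together with Gelfand's formula applied to $(T-P)^n = T^n - P$. You simply make explicit the steps (the power identity, the spectral radius limit, and the degenerate cases) that the paper's one-line proof leaves implicit.
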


\begin{proof}
Let $\alpha_n = \|T^n - P\|^{1/n} - \beta^*$ then by Theorem \ref{rateofspectrumtheorem}, we get the assertion.
\end{proof}

This result yields that if $T$ is uniformly $P$-ergodic, then the best rate of convergence is given by $r(T-P)$ and that the other rate is
$[\d_P(T^n)]^{1/n}$ which will never smaller that $r(T-P)$, since $r(T-P)=\inf [\d_P(T^n)]^{1/n}$.
We stress that if $\d_P(T)<1$, then this quantity is the best rate to use since it is much easier to calculate than the spectrum of $T$ or the spectral
radius of $T-P$. However, there are also some cases when $\d_P(T)=r(T-Q)$ which means that $\d_P(T)$ is the best possible.

\begin{theorem}
Let $T\in \Sigma (X)$ and $P$ be a Markov projection. Assueme that $T$ is uniformly $P$-ergodic. Then $\d_P (T) = r(T-P)$ if and only if $\d_P (T^n) = (\d_P(T))^n$ for all $n\in\mathbb N$.
\end{theorem}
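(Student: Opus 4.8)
The plan is to reduce everything to two facts about the numerical sequence $a_n := \d_P(T^n)$ (so that $a_1 = \d_P(T)$): its submultiplicativity, and the identity $r(T-P) = \inf_n a_n^{1/n}$. The latter is already available, since the computation in the proof of Lemma \ref{lemmaspectrum1} together with the implication (iii)$\Rightarrow$(ii) of Theorem \ref{rT} shows that, under uniform $P$-ergodicity, $r(T-P) = \lim_{n\to\infty} a_n^{1/n} = \inf_n a_n^{1/n}$. So no new work is needed there.

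First I would record the submultiplicative bound. Since $T$ is uniformly $P$-ergodic, one has $TP = PT = P$, hence $T \in \Sigma_P(X)$, and consequently every power $T^n$ lies in $\Sigma_P(X)$ as well. Applying Theorem \ref{Dob}(v) with $S = T^n$ and with $T^m$ in the role of $T$ yields $a_{m+n} = \d_P(T^{m+n}) \leq \d_P(T^m)\,\d_P(T^n) = a_m a_n$. Iterating the special case $m=1$ gives the key inequality $a_n \leq a_1^n = (\d_P(T))^n$ for every $n \in \mathbb N$.

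For the direction ($\Leftarrow$), assume $a_n = (\d_P(T))^n$ for all $n$. Then $a_n^{1/n} = \d_P(T)$ for every $n$, so the infimum is constant and $r(T-P) = \inf_n a_n^{1/n} = \d_P(T)$, as desired. For the direction ($\Rightarrow$), assume $\d_P(T) = r(T-P)$. Because $r(T-P) = \inf_n a_n^{1/n}$ is a lower bound for the sequence $a_n^{1/n}$, we get $\d_P(T) = a_1 \leq a_n^{1/n}$, that is $a_1^n \leq a_n$. Combining this with the submultiplicative bound $a_n \leq a_1^n$ from the previous step forces $a_n = a_1^n = (\d_P(T))^n$ for all $n$.

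There is no genuine obstacle here: once the infimum formula $r(T-P) = \inf_n (\d_P(T^n))^{1/n}$ and the submultiplicativity $\d_P(T^{m+n}) \leq \d_P(T^m)\d_P(T^n)$ are in hand, both implications are one-line squeeze arguments. The only point to double-check is that all quantities are legitimately nonnegative (Theorem \ref{Dob}(i)), so that the degenerate case $\d_P(T) = 0$ causes no trouble: then $a_n \leq 0$ and $a_n \geq 0$ give $a_n = 0 = (\d_P(T))^n$, while $r(T-P) = 0$, and both equivalences hold trivially.
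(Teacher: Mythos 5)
Your proposal is correct and follows essentially the same route as the paper's proof: both directions rest on the identity $r(T-P)=\lim_{n\to\infty}(\d_P(T^n))^{1/n}=\inf_n(\d_P(T^n))^{1/n}$ (from Lemma \ref{lemmaspectrum1} and Theorem \ref{rT}) together with the submultiplicativity $\d_P(T^{m+n})\leq \d_P(T^m)\d_P(T^n)$ coming from Theorem \ref{Dob}(v). The only cosmetic difference is that where the paper invokes the Spectral Mapping Theorem to get the lower bound $r(T^n-P)=(r(T-P))^n\leq \d_P(T^n)$, you extract the same bound $(r(T-P))^n\leq \d_P(T^n)$ directly from the infimum formula, after which the squeeze against $\d_P(T^n)\leq(\d_P(T))^n$ is identical.
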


\begin{proof}
Assume that $\d_P (T^n) = (\d_P(T))^n$ for all $n\in\mathbb N$. Then $(\d_P (T^n))^{1/n} = \d_P(T)$ for all $n\in\mathbb N$ and $\d_P(T) = \lim_{n\to\infty} (\d_P (T^n))^{1/n} = r(T-P)$.

Conversely, let us suppose that $r(T-P) = \d_P  (T)$. The uniform $P$-ergodicity implies $TP=PT=P$, therefore, $T^n - P = (T-P)^n $. Hence,
$$
r(T^n - P) = r ((T-P)^n) \leq \d_P(T^n) \leq (\d_P(T))^n = (r(T-P))^n.
$$
By the Spectral Mapping Theorem, we have $(r(T-P))^n = r(T^n-P)$, which yields
$$
\d_P (T^n)= (\d_P(T))^n, \  \forall n\in\mathbb N.
$$
This completes the proof.
\end{proof}

Let us provide an application of the obtained results.\\

Let $(E,\mathcal{F},\m)$ is an arbitrary probability space. By
$L^1(E,\m)$ we denote the usual $L^1$-space. Let $(X, X_+, \ck,
f)$ be an abstract state space. Consider $\tilde X:=L^1(E,\m; X)$
-- $L^1$-space of all $X$-valued measurable functions on
$(E,\mathcal{F},\m)$. The positive cone of this space is defined
usually, i.e. $\tilde X_+=L^1(E,\m; X_+)$. The generating
functional is defined as follows
$$
\tilde f(x)=\int f(x(t))d\m(t), \ \ x=x(t)\in L^1(E,\m: X).
$$

The base of $L^1(E,\m; X)$ is given by
$$
\tilde\ck=\{x\in L^1(E,\m; X_+): \ \tilde f(x)=1\}
$$
Then one can see that $(\tilde X,\tilde X_+,\tilde\ck,\tilde f)$
is an abstract state space.

Let $P(x,A)$ be a transition probability which defines a Markov
operator $S$ on $L^1(E,\m)$, whose dual $S^*$ acts on
$L^\infty(E,\m)$ as follows
$$
(S^*f)(x)=\int f(y)P(x,dy), \ \ f\in L^\infty.
$$
We assume that $S$ is uniformly $Q$-ergodic.

Now consider a Markov operator $T:X\to X$ which is uniformly $P$-ergodic for some Markov projection on $X$.
Using these two $S$ and $T$ operators, we define a new linear operator $\tilde T:\tilde X\to\tilde X$ whose dual acts on
$L^\infty(E,\m; X^*)$ as follows:
$$
\tilde T^*g(x)=\int P(x,dy)T^*g(y), \ \ g\in L^\infty(E,\m; X^*).
$$
We may look at $\tilde T$ by other way. One can see that the space $L^1(E,\m; X)$ can be treated as $L^1(E,\m)\otimes X$, and then the operator
$\tilde T$ is defined by $\tilde T=S\otimes T$. Now, using the standard argument (see \cite{M12}) we can establish that $\tilde T$ is uniformly
$Q\otimes P$-ergodic. Hence, by Theorem \ref{rT} we infer that $r(\tilde T-Q\otimes P)<1$. This means if $r(S-Q)<1$ and $r(T-P)<1$, then
one gets $r(\tilde T-Q\otimes P)<1$ which is a' priori not evident. Moreover, this gives the best rate for the convergence of
$\|\tilde T-Q\otimes P\|$. We point out that one has
$$
r(S\otimes T-Q\otimes P)\leq \max\{r(S-Q),r(T-P)\}.
$$
Indeed, let us denote $Z=S-Q, R=T-P$. Then due to \cite[Theorem 5.2]{MA}, one has
\begin{equation}\label{RZ}
ZQ=QZ=0, \ \ \ PR=RP=0.
\end{equation}
Therefore,
$$
S\otimes T-Q\otimes P=Q\otimes R+Z\otimes P+Z\otimes R.
$$
According to Theorem \ref{rT} and \eqref{RZ}, we have
\begin{eqnarray*}
r(S\otimes T-Q\otimes P)&=&\lim_{N\to\infty}\|(Q\otimes R+Z\otimes P+Z\otimes R)^N\|^{1/N}\\[2mm]
&=&\lim_{N\to\infty}\|Q\otimes R^N+Z^N\otimes P+Z^N\otimes R^N\|^{1/N}\\[2mm]
&\leq &\lim_{N\to\infty}\bigg(\|R^N\|+\|Z^N+\|Z^N\|\|R^N\|\bigg)^{1/N}\\[2mm]
&=&\max\big\{r(R),r(Z),r(Z)r(R)\big\}\\
&=&\max\{r(R),r(Z)\}
\end{eqnarray*}
which yields the assertion.

\section{Deoblin's condition for uniform $P$-ergodicity}

Let us introduce an abstract analogue of the
well-known Doeblin's Condition \cite{Num}. In this section, for the sake of convenience, we assume that $(X,X_+,\ck,f)$ is a
strong abstract state space.

As before, let $P$ be a Markov projection on $X$, and let $T\in \Sigma_P (X)$. Let $Q: X\to X$ be a Markov projection. We write $Q\leq P$, if $Q=QP=PQ$.
We say that $T$ satisfies \textit{condition $\frak{D}_P$}: if there exists a constant
$\t\in(0,1]$, an integer $n_0\in\bn$ and a Markov projection $Q$ with $Q\leq P$ such that for every $x\in \ck$  there exists $\f_{x}\in X_{+}$ with
$$
\sup\limits_{x}\|\f_{x}\|\leq \frac{\t}{4}
$$
such that
\begin{equation}\label{Dm}
T^{n_0}x+\f_{x}\geq\t Qx.
\end{equation}

The next result characterize the uniformly $P$-ergodic Markov operators in terms of the above condition $\frak{D}$.\\

\begin{theorem}\label{Dm1}  Let $(X,X_+,\ck,f)$ be a strong abstract state space, and let $P$ be a Markov projection on $X$.
Assume that $T\in \Sigma_P (X)$ and $TP=P$. Then the following conditions are equivalent:
\begin{enumerate}
\item[(i)]  $T$ satisfies condition $\frak{D}_P$;

 \item[(ii)] $T$ is uniformly $P$-ergodic.
\end{enumerate}
\end{theorem}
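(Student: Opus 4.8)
The plan is to reduce both implications to the characterization of uniform $P$-ergodicity furnished by Theorem~\ref{rT}: since $T\in\Sigma_P(X)$ and $TP=P$ are assumed throughout, $T$ is uniformly $P$-ergodic precisely when $\d_P(T^{n_0})<1$ for some $n_0$. Thus (i)$\Rightarrow$(ii) amounts to extracting a strict contraction estimate for $\d_P(T^{n_0})$ out of condition $\frak{D}_P$, while (ii)$\Rightarrow$(i) amounts to manufacturing the minorization inequality \eqref{Dm} from the norm convergence $\|T^{n_0}-P\|\to 0$.

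For (i)$\Rightarrow$(ii), I would fix the data $\tau,n_0,Q$ supplied by $\frak{D}_P$ and estimate $\d_P(T^{n_0})$ through the formula of Remark~\ref{SS}, which applies because $X$ is a strong abstract state space and $P$ is Markov. Take $u,v\in\ck$ with $u-v\in N_P$, i.e. $Pu=Pv$. The point on which the argument turns is that the minorants then coincide: since $Q\leq P$ gives $Q=QP$, one has $Qu=QPu=QPv=Qv$. Writing $c:=Qu=Qv\in\ck$ and setting
\[
a:=T^{n_0}u+\varphi_u-\tau c\geq 0,\qquad b:=T^{n_0}v+\varphi_v-\tau c\geq 0,
\]
I would use additivity of the norm on the cone together with $f(T^{n_0}u)=f(c)=1$ and $f(\varphi_u)=\|\varphi_u\|\leq\tau/4$ to obtain $\|a\|=f(a)=1-\tau+\|\varphi_u\|\leq 1-\tfrac34\tau$, and likewise for $b$. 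Since $T^{n_0}u-T^{n_0}v=(a-b)+(\varphi_v-\varphi_u)$, the triangle inequality yields $\|T^{n_0}u-T^{n_0}v\|\leq\|a\|+\|b\|+\|\varphi_u\|+\|\varphi_v\|\leq 2-\tau$, whence $\d_P(T^{n_0})\leq 1-\tfrac12\tau<1$ by Remark~\ref{SS}, and Theorem~\ref{rT} delivers uniform $P$-ergodicity.

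For (ii)$\Rightarrow$(i), I would take $Q=P$ and $\tau=1$, and choose $n_0$ so large that $\varepsilon:=\|T^{n_0}-P\|\leq\tfrac12$. For $x\in\ck$ put $r:=T^{n_0}x-Px$; as $T^{n_0}x,Px\in\ck$ we get $f(r)=0$, i.e. $r\in N$, with $\|r\|\leq\varepsilon$. The key step is to decompose this error through the base: in a strong abstract state space the closed unit ball coincides with the convex hull of $\ck\cup(-\ck)$, so $r/\|r\|=\alpha u-\beta v$ with $u,v\in\ck$ and $\alpha+\beta=1$, and the constraint $f(r)=0$ forces $\alpha=\beta=\tfrac12$, that is $r=\tfrac{\|r\|}{2}(u-v)$. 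Setting $\varphi_x:=\tfrac{\|r\|}{2}v\geq 0$ then gives $T^{n_0}x+\varphi_x=Px+r+\varphi_x=Px+\tfrac{\|r\|}{2}u\geq Px=\tau Qx$, while $\|\varphi_x\|=\tfrac{\|r\|}{2}\leq\tfrac{\varepsilon}{2}\leq\tfrac14=\tfrac{\tau}{4}$ uniformly in $x$, so $\frak{D}_P$ holds.

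The norm bookkeeping is routine; the one genuinely structural ingredient — and the step I expect to need the most care — is the base decomposition $r=\tfrac{\|r\|}{2}(u-v)$ in (ii)$\Rightarrow$(i). Because an abstract state space carries no lattice structure, one cannot split $r$ into positive and negative parts; instead one must invoke the strong (radial compactness) hypothesis to represent $r$ via the base $\ck$ and then absorb the negative contribution $\tfrac{\|r\|}{2}v$ into $\varphi_x$. Confirming that this representation is \emph{exact} rather than merely approximate is precisely where the assumption that $X$ is a strong abstract state space is indispensable.
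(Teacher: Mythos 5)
Your proposal is correct, and its overall architecture is exactly that of the paper: for (i)$\Rightarrow$(ii) both arguments use $Qx=Qy$ (from $Q\leq P$ and $P(x-y)=0$), additivity of $f$ on the cone, and the formula \eqref{SS1} to get $\d_P(T^{n_0})\leq 1-\tfrac{\tau}{2}<1$, then invoke Theorem \ref{rT}; for (ii)$\Rightarrow$(i) both take $Q=P$, $\tau=1$, pick $n_0$ with $\|T^{n_0}-P\|$ small, and absorb the ``negative part'' of the error $T^{n_0}x-Px$ into $\varphi_x$. The only substantive difference lies in how that negative part is produced. The paper simply writes $T^{n_0}x-Px=(T^{n_0}x-Px)_+-(T^{n_0}x-Px)_-$ and sets $\varphi_x=(T^{n_0}x-Px)_-$, lattice-style notation that is not literally available in an abstract state space (there is no canonical positive/negative part), and it leaves implicit why $\|\varphi_x\|$ is controlled by $\|T^{n_0}x-Px\|$. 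Your version supplies exactly the missing justification: radial compactness gives that the closed unit ball equals $\mathrm{conv}(\ck\cup(-\ck))$, so $r=T^{n_0}x-Px$ with $f(r)=0$ decomposes exactly as $r=\tfrac{\|r\|}{2}(u-v)$ with $u,v\in\ck$ (the constraint $f(r)=0$ forcing the coefficients to be $\tfrac12$), whence $\varphi_x=\tfrac{\|r\|}{2}v$ satisfies both the positivity and the norm bound. This is where the ``strong'' hypothesis on $X$ actually enters, and your write-up makes that explicit, which the paper's does not; it is a sharper rendering of the same step rather than a different route. (Two cosmetic remarks: handle $r=0$ by $\varphi_x=0$, and note that your bookkeeping in (i)$\Rightarrow$(ii) — keeping $\varphi_u,\varphi_v$ separate rather than replacing both by $\varphi_u+\varphi_v$ as the paper does — yields the same constant $1-\tfrac{\tau}{2}$.)
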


\begin{proof} (i) $\Rightarrow$ (ii).
By condition $\frak{D}_P$, there is a $\t\in (0,1]$, $n_0\in\bn$ and $Q\leq P$ such that for any two elements
$x,y\in\ck$ with $x-y\in N_P$, there exist
$\f_{x}\in\ck$, $\f_{y}\in X_{+}$ with
\begin{eqnarray}\label{Dm2}
\sup\limits_{x}\|\f_{x}\|\leq \frac{\t}{4}
\end{eqnarray}
such that
\begin{equation}\label{Dm3}
T^{n_0}x+\f_{x}\geq\t Qx, \ \ T^{n_0}y+\f_{y}\geq\t Qy.
\end{equation}

By putting $\f_{xy}=\f_{x}+\f_{x}$, from \eqref{Dm3} we obtain
\begin{equation}\label{Dm31}
T^{n_0}x+\f_{xy}\geq\t Qx, \ \ T^{n_0}y+\f_{xy}\geq\t Qy.
\end{equation}

By the Markovianity of $T$, and \eqref{Dm31}, \eqref{Dm2},  one gets
\begin{eqnarray}\label{Dm32}
\|T^{n_0}x+\f_{xy}-\t Qx\|&=&f(T^{n_0}x+\f_{xy}-\t Qx)\nonumber \\[2mm]
&=&1-\t+f(\f_{xy})\nonumber \\[2mm]
&\leq &1-\frac{\t}{2}.
\end{eqnarray}
By the same argument, one finds
\begin{eqnarray}\label{Dm33}
\|T^{n_0}y+\f_{xy}-\t Qy\|\leq 1-\frac{\t}{2}.
\end{eqnarray}

Due to $P(x-y)=0$ and $Q\leq P$, one gets $Qx=Qy$. Therefore, from \eqref{Dm32},\eqref{Dm33}, we obtain
\begin{eqnarray*}
\|T^{n_0}x-T^{n_0}y\|&=&\|T^{n_0}x+\f_{xy}-\t Qx-(T^{n_0}x+\f_{xy}-\t Qy)\|\\[2mm]
&\leq & \|T^{n_0}x+\f_{xy}-\t Qx\|+\|T^{n_0}x+\f_{xy}-\t Qy\|\\[2mm]
&\leq & 2\bigg(1-\frac{\t}{2}\bigg)
\end{eqnarray*}
this by \eqref{SS1} implies
$$
\d_P(T^{n_0})\leq 1-\frac{\t}{2}<1.
$$
Hence, by Theorem \ref{rT} we arrive at (ii).

(ii) $\Rightarrow$ (i).
 Assume that $T$ is uniformly $P$-mean ergodic.  Then
\begin{equation*}
\sup_{x\in\ck}\|T^{n}x-Px\|\to 0\ \ \ \textrm{as} \ \
n\to\infty.
\end{equation*}
Therefore, one can find $n_0\in\bn$ such that
\begin{equation}\label{N27}
\|T^{n_0}x-Px\|\leq\frac{1}{4}, \ \ \textrm{for all} \ \ x\in\ck.
\end{equation}

Then, for any $x\in\ck$, we decompose
\begin{eqnarray}\label{Dm4}
T^{n_0}x-Px=(T^{n_0}x-Px)_+-(T^{n_0}x-Px)_-
\end{eqnarray}
Denote
$$
\f_{x}=(T^{n_0}x-Px)_-.
$$
It is clear that $\f_{x}\in X_+$, and from \eqref{N27} one gets
$$
\sup_{x\in \ck}\|\f_{x}\| \leq\frac{1}{4}.
$$
Moreover, by \eqref{Dm4} we obtain
\begin{eqnarray*}
T^{n_0}x+\f_{x}&=&Px+T^{n_0}x-Px+\f_{x}\\[2mm]
&=&Px+(T^{n_0}(T)x-Px)_+\\
 &\geq& Px
\end{eqnarray*}
which means that $T$ satisfies the condition $\frak{D}_P$. This completes the proof.
\end{proof}

\begin{remark} We notice that the Deoblin's condition for $T$ has been investigated in
\cite{DP,M,M01,SG,SZ,Sz} when $P$ was taken rank one projection. We think that such type of result is even a new in the classical, i.e.  $X$ is taken as an $L^1$-space.
\end{remark}

Let us consider an other condition similar to $\frak{D}$. Namely, let $P$ and $T$ be as before. We say that $T$ satisfies \textit{condition $\frak{D}_P^*$}: if there exists a constant
$\l\in(0/2,1]$, an integer $n_0\in\bn$ and a Markov projection $Q$ with $Q\leq P$  such that for every $x\in \ck$ there exists $u_x\in X_+$ with $\|u_x\|\geq \l$
such that
\begin{equation}\label{DPm}
T^{n_0}x\geq u_x, \ \ Qx\geq u_x.
\end{equation}

\begin{theorem}\label{DPm1}  Let $(X,X_+,\ck,f)$ be a strong abstract state space, and let $P$ be a Markov projection on $X$.
Assume that $T\in \Sigma_P (X)$ and $TP=P$. If $T$  satisfies condition $\frak{D}_P^*$, then $T$ is uniformly $P$-ergodic.
\end{theorem}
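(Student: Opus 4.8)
The plan is to reduce the statement to the contraction criterion of Theorem \ref{rT}: since $TP=P$ is assumed, it suffices to exhibit an iterate with $\d_P(T^{n_0})<1$, and the integer $n_0$ supplied by condition $\frak{D}_P^*$ is the obvious candidate. To bound $\d_P(T^{n_0})$ I would use the face formula \eqref{SS1} of Remark \ref{SS}, which is available because $X$ is a strong abstract state space and $P$ is a Markov projection. Thus I fix two arbitrary states $x,y\in\ck$ with $x-y\in N_P$ and aim to bound $\|T^{n_0}x-T^{n_0}y\|$ by a constant strictly smaller than $2$.

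First I would record the consequences of Markovianity together with the condition. Applying $\frak{D}_P^*$ to $x$ and to $y$ produces $u_x,u_y\in X_+$ with $\|u_x\|,\|u_y\|\geq\l$ and $T^{n_0}x\geq u_x$, $T^{n_0}y\geq u_y$, $Qx\geq u_x$, $Qy\geq u_y$. Because $Q\leq P$ (so $Q=QP$) and $P(x-y)=0$, one gets $Qx=Qy=:q$; moreover $q\in\ck$ and $T^{n_0}x,T^{n_0}y\in\ck$, so each of these three elements has norm $1$. The crucial observation is that every vector I shall subtract is dominated by one of these states, so its norm can be computed by the additivity of $\|\cdot\|$ on the cone: from $T^{n_0}x-u_x\in X_+$ and $T^{n_0}x=(T^{n_0}x-u_x)+u_x$ I read off $\|T^{n_0}x-u_x\|=1-\|u_x\|\leq 1-\l$, and in the same way $\|T^{n_0}y-u_y\|\leq 1-\l$, $\|q-u_x\|\leq 1-\l$ and $\|q-u_y\|\leq 1-\l$.

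The heart of the argument is then the algebraic identity
\[
T^{n_0}x-T^{n_0}y=(T^{n_0}x-u_x)-(T^{n_0}y-u_y)+(q-u_y)-(q-u_x),
\]
in which the inconvenient difference $u_x-u_y$ has been rewritten as $(q-u_y)-(q-u_x)$ precisely so that it is again expressed through positive vectors dominated by $q$. The triangle inequality applied to these four summands, together with the four norm bounds above, yields $\|T^{n_0}x-T^{n_0}y\|\leq 4(1-\l)$; taking the supremum over all admissible $x,y$ and using \eqref{SS1} gives $\d_P(T^{n_0})\leq 2(1-\l)$. Since the constant in $\frak{D}_P^*$ is taken with $\l>1/2$, this forces $\d_P(T^{n_0})<1$, and Theorem \ref{rT} then delivers the uniform $P$-ergodicity of $T$.

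I expect the one genuinely delicate point to be the rewriting of $u_x-u_y$. A general strong abstract state space carries no lattice operations, so one cannot control the overlap of $T^{n_0}x$ and $T^{n_0}y$ through a minorant $u_x\wedge u_y$ as one would in $L^1$; routing the estimate through the common value $q$ and the additivity of the norm on $X_+$ is exactly what bypasses any lattice hypothesis. It is also the place where the threshold enters: the bound $4(1-\l)$ falls below $2$ only for $\l>1/2$, which is why the admissible range of the constant in $\frak{D}_P^*$ must be $(1/2,1]$.
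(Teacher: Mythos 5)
Your proof is correct and follows essentially the same route as the paper's: both bound $\|T^{n_0}x-T^{n_0}y\|$ by $4(1-\l)$ using the additivity of the norm on $X_+$ and the identity $Qx=Qy$ forced by $Q\leq P$ and $P(x-y)=0$, then conclude $\d_P(T^{n_0})\leq 2(1-\l)<1$ via \eqref{SS1} and invoke Theorem \ref{rT}. Your four-term identity is just a regrouping of the paper's two estimates $\|T^{n_0}x-Qx\|\leq 2(1-\l)$ and $\|T^{n_0}y-Qy\|\leq 2(1-\l)$, so the arguments coincide term by term.
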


\begin{proof} By condition $\frak{D}_P^*$, there is a $\l\in (1/2,1]$, $n_0\in\bn$  and for any two elements
$x,y\in\ck$ with $x-y\in N_P$, there exist
$u_x,u_yin X_+$, with $\|u_x|\geq \l$, $\|u_y\|\geq \l$
such that
\begin{eqnarray}\label{DPm2}
&& T^{n_0}x\geq u_x, \ \ Qx \geq u_x,\\[2mm]\label{DPm3}
&& T^{n_0}y\geq u_y, \ \ QPy \geq v_y.
\end{eqnarray}

Markovianity of $T$ with \eqref{DPm2} implies
\begin{eqnarray*}
\|T^{n_0}x-Qx\|&=&\|T^{n_0}x-u-(Qx-u)\|\\[2mm]
&\leq & \|T^{n_0}x-u\|+\|Qx-u\|\\[2mm]
&=&1-f(u)+1-f(u)\\[2mm]
&\leq &2(1-\l).
\end{eqnarray*}
By the same argument with \eqref{DPm3}, one finds
\begin{eqnarray*}
\|T^{n_0}x-Qx\|\leq 2(1-\l).
\end{eqnarray*}

Now using $Px=Py$ and $Q\leq P$, we obtain
\begin{eqnarray*}
\|T^{n_0}x-T^{n_0}y\|\leq \|T^{n_0}x-Qx\|+\|T^{n_0}y-Qy\|\leq 4(1-\l)
\end{eqnarray*}

Hence, by \eqref{SS1} we arrive at
$$
\d_P(T^{n_0})\leq 2(1-\l)<1.
$$
which by Theorem \ref{rT} yields the assertion.
\end{proof}

We notice that even the condition $\frak{D}_P$ is sufficient, but sometimes it could be practical rather than $\frak{D}$.

\appendix

\section{Examples of abstract State Spaces}

 Let us provide some examples of
abstract state spaces.

\begin{example}\label{eL1}
Let $X=L^1(E,\m)$ be the classical $L^1$-space. Then $X$ with the usual order and with
$$
f(x)=\int xd\m
$$
is an abstract state space.
\end{example}

\begin{example}\label{E1} Now, let us consider a more general example.
  Let $M$ be a von Neumann algebra. Let $M_{h,*}$ be the
Hermitian part of the predual space $M_*$ of $M$. As a base $\ck$
we define the set of normal states of $M$. Then
$(M_{h,*},M_{*,+},\ck,\id)$ is a strong abstract state spaces,
where $M_{*,+}$ is the set of all positive functionals taken from
$M_*$, and $\id$ is the unit in $M$. In particular, if
$M=L^\infty(E,\m)$, then $M_{*}=L^1(E,\m)$ is an abstract state
space.
\end{example}


\begin{example}\label{E13}
Let $X$ be a Banach space over $\br$. Consider a new Banach space
$\mathcal{X}=\br\oplus X$ with a norm
$\|(\a,x)\|=\max\{|\a|,\|x\|\}$. Define a cone
$\mathcal{X}_+=\{(\a,x)\ : \ \|x\|\leq \a, \ \a\in\br_+\}$ and a
positive functional $f(\a,x)=\a$. Then one can define a base
$\ck=\{(\a,x)\in\mathcal{X}:\ f(\a,x)=1\}$. Clearly, we have
$\ck=\{(1,x):\ \|x\|\leq 1\}$. Then
$(\mathcal{X},\mathcal{X}_+,\ck,f)$ is an abstract state space
\cite{Jam}. Moreover, $X$ can be isometrically embedded into
$\mathcal{X}$.  Using this construction one can study several
interesting examples of abstract state spaces.
\end{example}

\begin{example}\label{E14}  Let $A$ be the disc algebra, i.e. the sup-normed space
of complex-valued functions which are continuous on the closed
unit disc, and analytic on the open unit disc. Let $X =\{f\in A :\
f(1)\in\br \}$. Then $X$ is a real Banach space with the following
positive cone $X_+=\{f\in X: f(1)=\|f\|\}=\{f\in X: \
f(1)\geq\|f\|\}$. The space $X$ is an abstract state space, but
not strong one (see  \cite{Yo} for details).
\end{example}

\section{Examples of Markov operators}

 Let us consider several examples of Markov operators.

\begin{example}\label{E2}

Let $X=L^1(E,\m)$ be the classical $L^1$-space. Then any
transition probability $P(x,A)$ defines a Markov operator $T$ on
$X$, whose dual $T^*$ acts on $L^\infty(E,\m)$ as follows \cite{K}
$$
(T^*f)(x)=\int f(y)P(x,dy), \ \ f\in L^\infty.
$$
\end{example}

\begin{example}\label{E22}
Let $M$ be a von Neumann algebra, and consider
$(M_{h,*},M_{*,+},\ck,\id)$ as in Example \ref{E1}. Let $\Phi:
M\to M$ be a positive, unital ($\Phi(\id)=\id$) linear mapping.
Then the operator given by $(Tf)(x)=f(\Phi(x))$, where $f\in
M_{h,*}, x\in M$, is a Markov operator. Certain explicite examples of Markov operators can be found in \cite{CPR,M}.
\end{example}

\begin{example}\label{E23} Let $X=C[0,1]$ be the space of real-valued continuous
functions on $[0,1]$. Denote
$$
X_+=\big\{x\in X: \ \max_{0\leq t\leq 1}|x(t)-x(1)|\leq 2
x(1)\big\}.
$$
Then $X_+$ is a generating cone for $X$, and $f(x)=x(1)$ is a
strictly positive linear functional. Then $\ck=\{x\in X_+: \
f(x)=1\}$ is a base corresponding to $f$. One can check that the
base norm $\|x\|$ is equivalent to the usual one
$\|x\|_{\infty}=\max\limits_{0\leq t\leq 1}|x(t)|$. Due to
closedness of $X_+$ we conclude that $(X,X_+,\ck,f)$ is an
abstract state space. Let us define a mapping $T$ on $X$ as
follows:
$$(Tx)(t)=tx(t).
$$
It is clear that $T$ is a Markov operator on $X$.
\end{example}

\begin{example}\label{E24} Let $X$ be a Banach space over $\br$. Consider the
abstract state space $(\mathcal{X},\mathcal{X}_+,\tilde \ck,f)$
constructed in Example \ref{E13}.  Let $T:X\to X$ be a linear
bounded operator with $\|T\|\leq 1$. Then the operator
$\mathcal{T}: \mathcal{X}\to\mathcal{X}$ defined by
$\mathcal{T}(\a,x)=(\a,Tx)$ is a Markov operator. More concrete examples of such type of Markov operators have been studied in \cite{M0}.
\end{example}

\begin{example}\label{E25} Let $A$ be the disc algebra, and let $X$ be the
abstract state space as in Example \ref{E14}. A mapping $T$ given
by $Tf(z)=zf(z)$ is clearly a Markov operator on $X$.
\end{example}

\bibliographystyle{amsplain}

\end{document}